\documentclass[12pt]{amsproc}
\usepackage{amssymb}

\usepackage{amsmath}

\newtheorem{thm}{Theorem}


\theoremstyle{remark}

\numberwithin{equation}{section}

\newcommand{\la}{\langle}
\newcommand{\ra}{\rangle}

 \newcommand{\al}{\alpha}

\def\cA{\hbox{$\mathcal A$}}

\def\cN{\hbox{$\mathcal N$}}

\def\cO{\hbox{$\mathcal O$}}

\def\fS{\hbox{$\mathfrak S$}}

\def\bN{\hbox{$\mathbb N$}}

\newcommand{\LO}{\mathcal{LO}}

\def\ds{\hbox{${}^{**}$}}

\def\L1{\hbox{$L^1(G)$}}
\def\M{\hbox{$M(G)$}}
\def\WS{\hbox{$WAP(S)$}}
\def\WG{\hbox{$WAP(G)$}}

\def\dt{\hbox{$\delta_t$}}

\def\dtm{\hbox{$\delta_{t^{-1}}$}}

\begin{document}
J. Math. Anal. Appl. to appear

\title[group algebras]{$2m$-Weak amenability of group algebras} 

\author{Yong Zhang}

\address{ Department of Mathematics\\ 
University of Manitoba\\ Winnipeg MB R3T 2N2\\ Canada}
\email{zhangy@cc.umanitoba.ca\hfill \break
}

\thanks{Supported by NSERC 238949-2011. }
\subjclass{Primary 43A20, 46H20; Secondary 43A10}
\keywords{derivation, $2m$-weak amenability, locally compact group, common fixed point, L-embedded, semigroup, weakly almost periodic}

\begin{abstract}
A common fixed point property for semigroups is applied to show that the group algebra \L1 of a locally compact group $G$ is $2m$-weakly amenable for each integer $m\geq 1$.
\end{abstract}

\def\timestring{\begingroup
\count0=\time \divide\count0 by 60
\count2 = \count0
\count4 = \time \multiply\count0 by 60
\advance\count4 by -\count0
\ifnum\count4 <10 \toks1={0}%
\else \toks1 = {}%
\fi
\ifnum\count2<12 \toks0 = {a.m.}%
\else \toks0 = {p.m.}%
\advance\count2 by -12
\fi
\ifnum\count2=0 \count2 = 12
\fi
\number\count2:\the\toks1 \number\count4
\thinspace \the\toks0
\endgroup}

\def\date{ \timestring \enskip {\ifcase\month\or January\or February\or
March\or April\or May\or June\or July\or August\or September\or October\or
November\or December\fi}
\number\day}

\maketitle
\begin{center}
\today
\end{center}

\section{Introduction}

Let \cA\  be a Banach algebra and $X$ a Banach $\cA$-bimodule. A linear mapping $D$: $\cA\to X$ is called a \emph{derivation} if it satisfies $D(ab) = aD(b) + D(a)b$ ($a,b\in \cA$). Given any $x\in X$, the mapping $\text{ad}_x$: $a\mapsto ax-xa$ ($a\in \cA$) is a continuous derivation, called an \emph{inner derivation}. 

If $X$ is a Banach $\cA$-bimodule, then the dual space $X^*$ of $X$ is naturally a Banach $\cA$-bimodule with the $\cA$-module actions defined by
\[ \la x, af\ra = \la xa,f\ra \quad \la x, fa\ra = \la ax,f\ra \quad(a\in \cA, f\in X^*, x\in X).\]
Note that the Banach algebra \cA\ itself is a Banach \cA-bimodule with the product giving the module actions. So $\cA^{(n)}$, the $n$-th dual space of \cA, is naturally a Banach \cA-bimodule in the above sense for each $n\in \bN$. The Banach algebra $\cA$ is called \emph{$n$-weakly amenable} if every continuous derivation from $\cA$ into $\cA^{(n)}$ is inner. If $\cA$ is $n$-weakly amenable for each $n\in \bN$ then it is called \emph{permanently weakly amenable}.

Let $G$ be a locally compact group. The integral of a function $f$ on a measurable subset $K$ of $G$ against a fixed left Haar measure is denoted by $\int_K{f}dx$. Two functions on $G$ are regarded identical if they are equal to each other almost everywhere with respect to the left Haar measure. The group algebra \L1\ is the Banach algebra consisting of all absolutely integrable functions on $G$ (with respect to the left Haar measure), equipped with the convolution product and the usual $L^1$ norm 
\[\|f\|_1 := \int_G{|f(t)|}dt. \]
When $G$ is discrete, $\L1$ is $\ell^1(G)$ consisting of all absolutely summable functions on $G$.

 B.E. Johnson showed in \cite{Joh_weak} 
that \L1\ is always $1$-weakly amenable for any locally compact group $G$. 
It was shown further in \cite{D-G-G} that  \L1 is in fact $n$-weakly amenable for all odd numbers $n$. Whether this is still true for even numbers $n$ was left open in \cite{D-G-G}. For a free group $G$, Johnson proved later in \cite{Joh 1999} that  $\ell^1(G)$ is indeed $2m$-weakly amenable for any $m\in \bN$. The problem has been resolved affirmatively for general locally compact group $G$ in \cite{C-G-Z} and in \cite{Losert10} independently, using a theory established in \cite{Losert08}.

In this note we present a short proof to the $n$-weak amenability of \L1\ for even numbers $n$. Our proof is based on a common fixed point property for semigroups. In Section~\ref{FP} we study this fixed point property.  For the general theory concerning  amenability and fixed point properties of locally compact groups we refer the reader to \cite{greenleaf, Ric}. The proof to the main result will be given in Section~\ref{proof}. 

\section{Common fixed points for semigroups}\label{FP}

Let $S$ be a (discrete) semigroup. The space of all bounded complex valued functions on $S$ is denoted by $\ell^\infty(S)$. It is a Banach space with the uniform supremum norm. In fact $\ell^\infty(S) = (\ell^1(S))^*$, the dual space of $\ell^1(S)$.  For each $s\in S$ and each $f\in \ell^\infty(S)$ let $\ell_sf$ be the left translate of $f$ by $s$, that is $\ell_sf(t)=f(st)$ ($t\in S$) (the right translate $r_sf$ is defined similarly). A function $f\in \ell^\infty(S)$ is called \emph{weakly almost periodic} if its left orbit $\LO(f) = \{\ell_sf:\, s\in S\}$ is relatively compact in the weak topology of $\ell^\infty(S)$. The space of all weakly almost periodic functions on $S$ is denoted by $WAP(S)$, which is a closed subspace of $\ell^\infty(S)$ containing the constant function and invariant under the left and right translations. A linear functional $m\in WAP(S)^*$ is a \emph{mean} on $WAP(S)$ if $\|m\| = m(1) =1$. A mean $m$ on $WAP(S)$ is a \emph{left invariant mean} (abbreviated LIM) if $m(\ell_sf) = m(f)$ for all $s\in S$ and all $f\in WAP(S)$. If $S$ is a group, it is well known that $WAP(S)$ always has a LIM \cite{greenleaf}.

Let $X$ be a Banach space and $C$ a nonempty subset of $X$. A mapping $T$: $C\to C$ is called \emph{nonexpansive} if $\|T(x)-T(y)\|\leq \|x-y\|$ for all $x,y\in C$. When $X$ is a separable locally convex topological space whose topology is determined by a family $Q$ of seminorms on $X$, we  will denote it by $(X,Q)$ to highlight the topology $Q$.

Let $C$ be a subset of a locally convex topological vector space $(X,Q)$. We say that $\fS = \{T_s:\; s\in S\}$ is a \emph{representation} of $S$ on $C$ if for each $s\in S$, $T_s$ is a mapping from $C$ into $C$ and $T_{st}(x) = T_s(T_tx)$ ($s,t\in S$, $x\in C$). 
The representation is called continuous if each $T_s$ ($s\in S$) is $Q$-$Q$ continuous; It is called equicontinuous if for each neighborhood $\cN$ of $0$ there is a neighborhood $\cO$ of $0$ such that $T_s(x) -T_s(y)\in \cN$ whenever $x,y\in C$, $x-y \in \cO$ and $s\in S$. The representation is called \emph{affine} if $C$ is convex and each $T_s$ ($s\in S$) is an affine mapping, that is  $T_s(ax+by) = aT_s(x) + bT_s(y)$ for all constants $a,b \geq 0$ with $a +b = 1$, $s\in S$ and $x, y\in C$. We say that $x\in C$ is a \emph{common fixed point} for (the representation of) $S$ if $T_s (x) = x$ for all $s\in S$. 

The following fixed point theorem was proved in \cite{Lau76}.
\begin{thm}\label{lau}
Let $S$ be a discrete semigroup and $\fS$ an equicontinuous affine representation of $S$ on a weakly compact convex subset $C$ of a separated locally convex space $X$. If $WAP(S)$ has a left invariant mean then $C$ contains a common fixed point for $S$.
\end{thm}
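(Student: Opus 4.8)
The plan is to produce the common fixed point by averaging one orbit against the invariant mean and then recognizing the average as a point of $C$. Fix $x_0\in C$, and let $A(C)$ be the space of real-valued affine functions on $C$ that are continuous for the weak topology of $X$ — the topology in which $C$ is compact and $\fS$ is equicontinuous. For $F\in A(C)$ set $g_F(s)=F(T_sx_0)$ ($s\in S$); since a weakly compact set is bounded, $g_F\in\ell^\infty(S)$. The crucial first step is to show $g_F\in\WS$. I would do this through Grothendieck's iterated-limit criterion: for sequences $(s_n),(t_k)$ in $S$ one must check that the two iterated limits of $g_F(s_nt_k)=F\bigl(T_{s_n}(T_{t_k}x_0)\bigr)$ coincide whenever both exist. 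By weak compactness of $C$ one may pass to a weakly convergent subnet $T_{t_k}x_0\to y$; weak continuity of each $T_{s_n}$ then identifies the limit over $k$, and the equicontinuity of $\fS$ is exactly what lets this be interchanged with the limit over $n$. I expect this step — where weak compactness and equicontinuity must be played against one another — to be the main obstacle.

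Granting $g_F\in\WS$ for all $F\in A(C)$, let $m$ be a left invariant mean on $\WS$ and define $\Lambda\colon A(C)\to\bR$ by $\Lambda(F)=m(g_F)$. Then $\Lambda$ is linear ($F\mapsto g_F$ is linear, $m$ is linear), $\Lambda(1)=m(1)=1$, $\Lambda$ is positive, and $\Lambda(F)\le\sup_{x\in C}F(x)$. The second step is to represent $\Lambda$ by a point of $C$. The evaluation map $C\to\bR^{A(C)}$, $x\mapsto(F\mapsto F(x))$, is affine and continuous from the compact Hausdorff weak topology on $C$, so its range $K$ is compact and convex. If $\Lambda\notin K$, a Hahn--Banach separation in $\bR^{A(C)}$ produces $F\in A(C)$ and $c\in\bR$ with $F\le c$ on $C$ yet $\Lambda(F)>c$; but then $g_F\le c$ on $S$ and monotonicity of $m$ gives $\Lambda(F)=m(g_F)\le c$, a contradiction. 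Hence $\Lambda=\mathrm{ev}_{\bar x}$ for some $\bar x\in C$; that is, $m(g_F)=F(\bar x)$ for every $F\in A(C)$.

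It remains to verify that $\bar x$ is a common fixed point. Fix $t\in S$. Each $T_t$ is affine and (by the hypotheses) weakly continuous on $C$, so $F\circ T_t\in A(C)$ whenever $F\in A(C)$, and
\[
g_{F\circ T_t}(s)=F\bigl(T_t(T_sx_0)\bigr)=F(T_{ts}x_0)=(\ell_t g_F)(s)\qquad(s\in S).
\]
Using the representation of $\Lambda$ and the left invariance of $m$,
\[
F(T_t\bar x)=\Lambda(F\circ T_t)=m(\ell_t g_F)=m(g_F)=\Lambda(F)=F(\bar x).
\]
Since $A(C)$ separates the points of $C$ (Hahn--Banach), $T_t\bar x=\bar x$; as $t\in S$ was arbitrary, $\bar x$ is a common fixed point for $\fS$. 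Besides the $\WS$-membership step, the point that most needs care is the weak continuity of each $T_s$ on $C$, to be obtained from the continuity/equicontinuity hypotheses together with the weak compactness of $C$; the reduction from complex to real scalars, if $X$ is complex, is routine via real and imaginary parts.
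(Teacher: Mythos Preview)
The paper does not prove this theorem; it is quoted from \cite{Lau76} as a known result, so there is no in-paper argument to compare against. Your barycenter-via-invariant-mean plan is the standard approach and its architecture is correct.

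Both points you flag can be settled. For weak continuity of each $T_s$ on $C$: the graph of an affine $Q$-continuous self-map of $C$ is convex and $Q$-closed in $C\times C$, hence weakly closed there, and a closed graph between compact Hausdorff spaces forces continuity. For the $\WS$ step, your Grothendieck sketch implicitly uses \emph{weak} equicontinuity to interchange limits, whereas only $Q$-equicontinuity is assumed; this is the one place where the plan, as written, is genuinely incomplete. The observation that closes the gap is that any $Q$-continuous real affine function on the convex set $C$ is the restriction of some $\phi+c$ with $\phi\in X^*$, and is therefore automatically weakly continuous. With this in hand, any pointwise-weak cluster point $T$ of $(T_{s_n})$ in $C^C$ is affine and, by $Q$-equicontinuity, $Q$-to-weak continuous; hence each $\phi\circ T$ is $Q$-continuous affine and thus weakly continuous, so $T$ itself is weak--weak continuous. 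Running your double-limit computation with such a $T$ and a weak cluster point $y$ of $(T_{t_k}x_0)$ shows that both iterated limits equal $F(Ty)$, which yields $g_F\in\WS$. The Hahn--Banach separation placing $\Lambda$ inside the evaluation image of $C$, and the invariance calculation $F(T_t\bar x)=m(\ell_t g_F)=m(g_F)=F(\bar x)$, are correct as you wrote them.
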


Let $B$ be a nonempty bounded subset of a Banach space $X$. By definition the \emph{Chebyshev radius} of $B$ in $X$ is
\[  r_B =\inf\{r\geq 0: \exists x\in X\; \sup_{b\in B}\|x-b\| \leq r\}. \]
Clearly we have $0\leq r_B < \infty$ and 
\begin{equation}\label{radius}
\sup_{b\in B}\|x-b\| \geq r_B\quad \text{for each } x\in X.
\end{equation}
 The \emph{Chebyshev center} of $B$ in $X$ is defined to be
\[  C_B =\{x\in X: \sup_{b\in B}\|x-b\| \leq r_B\}.  \]
Chebyshev center has been extensively used in the field of fixed point theory (see \cite{D-S,G-K}). Some asymptotic version of it has been employed to study fixed point properties of semigroups \cite{Lim 80, Lim 74, Lim Pac}.

We now recall that a Banach space $X$ is \emph{L-embedded} if the image of $X$ under the canonical embedding into its bidual $X^{**}$, still denoted by $X$, is an $\ell_1$ summand in $X^{**}$, that is if there is a subspace $X_s$ of $X^{**}$ such that $X^{**} = X\oplus_1 X_s$, where $\oplus_1$ denotes the $\ell_1$ direct sum. The class of L-embedded Banach spaces includes all $L^1(\Sigma,\mu)$ (the space of of all absolutely integrable functions on a measure space $(\Sigma,\mu)$), preduals of von Neumann algebras, dual spaces of M-embedded Banach spaces and the Hardy space $H_1$. In particular, given a locally compact group $G$, the space $L^1(G)$ is L-embedded. So are its even duals $L^1(G)^{(2m)}$ ($m\in \bN$). We refer to \cite{HWW} for more details of the theory concerning this type of Banach spaces. We also refer to  \cite{BGM, BPP, Jap 02} for the study of fixed points of various mappings in an L-embedded Banach space. In \cite{BGM}, as an application of a fixed point theorem, a surprising short solution to the well-known derivation problem was given. The problem was first settled by V. Losert in \cite{Losert08}. 

 We now give a common fixed point theorem for semigroups, which will provide the major machinery for our proof to the main result.
\begin{thm}\label{L-embedded}
Let $S$ be a discrete semigroup and $\fS$ a representation of $S$ on an L-embedded Banach space $X$ as nonexpansive affine mappings. Suppose that \WS\ has a LIM and suppose that there is a nonempty bounded set $B\subset X$ such that $B\subseteq \overline{T_s(B)}$ for all $s\in S$, then $X$ contains a common fixed point for $S$.
\end{thm}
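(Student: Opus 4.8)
The plan is to exploit the $\ell_1$-decomposition $X^{**}=X\oplus_1 X_s$ together with Theorem~\ref{lau} applied to a suitable affine action on a weakly compact convex set. First I would form the weak$^*$ closure $K=\overline{B}^{\,w^*}\subseteq X^{**}$; since $B$ is bounded, $K$ is weak$^*$ compact, and $\mathrm{co}\,K$ (or its weak$^*$ closed convex hull $\widetilde K$) is a weak$^*$ compact convex subset of $X^{**}$. Each $T_s$ is affine and nonexpansive on $X$, hence uniformly continuous; I would argue that $T_s$ extends to a weak$^*$-continuous affine map $\widetilde T_s$ on bounded sets (this is where one uses that an affine nonexpansive map on a Banach space has a canonical bitranspose-type extension to the bidual, mapping bounded weak$^*$-closed convex sets to bounded sets). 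The hypothesis $B\subseteq \overline{T_s(B)}$ guarantees $K\subseteq \widetilde T_s(K)$, so $\widetilde T_s$ maps $\widetilde K$ into itself and $\fS$ becomes an affine representation on $\widetilde K$. Since the weak$^*$ topology on a bounded set is given by seminorms coming from a separating family in $X^*$, and after passing to an appropriate separable sub-situation if needed, Theorem~\ref{lau} (whose equicontinuity hypothesis I would need to check, or replace by the relevant separately-continuous version — see the remark below) yields a common fixed point $\xi\in\widetilde K$ for $\{\widetilde T_s\}$.

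The heart of the argument is then to push $\xi$ back into $X$ using L-embeddedness. Write $\xi=x+\eta$ with $x\in X$, $\eta\in X_s$, via the $\ell_1$ projection $P\colon X^{**}\to X$, so $\|\xi\|=\|x\|+\|\eta\|$. I would like to show $\eta=0$, i.e. that the fixed point automatically lies in $X$. The key point is that $\widetilde T_s$ respects the decomposition in a favorable way: since $T_s(X)\subseteq X$, one expects $\widetilde T_s(X)\subseteq X$ and, using nonexpansiveness plus the isometric nature of the $\oplus_1$ splitting, that $P\circ\widetilde T_s=T_s\circ P$ on $\widetilde K$ — or at least $\|P\widetilde T_s(\zeta)-P\widetilde T_s(\zeta')\|+\|(\mathrm{id}-P)\widetilde T_s(\zeta)-(\mathrm{id}-P)\widetilde T_s(\zeta')\|\le\|P\zeta-P\zeta'\|+\|(\mathrm{id}-P)\zeta-(\mathrm{id}-P)\zeta'\|$ with the "$X_s$-part" only shrinking. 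Applying this with $\zeta=\xi$, $\zeta'=x=P\xi\in X$: since $x$ is fixed by... — no, $x$ need not be fixed. Instead I would look at the orbit and a Chebyshev-center/minimality argument: among fixed points of $\fS$ in $\widetilde K$ (a nonempty weak$^*$-compact convex set, itself $\fS$-invariant), or rather among weak$^*$-closed convex $\fS$-invariant subsets, choose one, say $F$, and consider $r=\inf_{z\in F}\|{(\mathrm{id}-P)}z\|$; show the sublevel set where this is attained is again invariant and nonempty by weak$^*$ lower semicontinuity of $\|(\mathrm{id}-P)(\cdot)\|$, then use nonexpansiveness to force $r=0$.

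Concretely, the cleanest route: pick any fixed point $\xi$ and set $z_0=P\xi\in X$. Because $\{T_s\}$ are nonexpansive on $X$ and $z_0\in X$, the orbit $\{T_s z_0\}$ is bounded; form $C=\overline{\mathrm{co}}^{\,w^*}\{\widetilde T_s z_0:s\in S\}\cup\{\xi\}$-type set — actually take $C$ to be the weak$^*$-closed convex hull of the orbit of $z_0$ under the $\widetilde T_s$. Then $C$ is weak$^*$ compact, convex, $\fS$-invariant (by affineness and weak$^*$-continuity), so again by Theorem~\ref{lau} there is a fixed point $\xi_0\in C$. Now the decisive estimate: for every $s$, $\|(\mathrm{id}-P)\widetilde T_s w\|\le\|(\mathrm{id}-P)w\|$ for $w$ in this set (since $\widetilde T_s$ pushes the "$X_s$-direction" inward when the base point lies in $X$ — this uses that $T_s$ maps $X$ to $X$ together with the $\ell_1$ geometry), and $(\mathrm{id}-P)(\cdot)$ is weak$^*$ lower semicontinuous and affine-compatible, so $\|(\mathrm{id}-P)\xi_0\|=0$, giving $\xi_0\in X$; this $\xi_0$ is the desired common fixed point in $X$.

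\begin{rema}
The main obstacle I anticipate is precisely the step asserting $\|(\mathrm{id}-P)\widetilde T_s w\|\le\|(\mathrm{id}-P)w\|$ — i.e. that a nonexpansive affine map sending $X$ into $X$ cannot increase the $X_s$-component of a bidual element whose base point sits in $X$. This is a genuine use of the L-embedded structure and is not formal: one must analyze how the bitranspose $\widetilde T_s$ interacts with the $\ell_1$ projection $P$, likely by writing $w=z_0+\mu$ with $z_0=Pw\in X$, using $\widetilde T_s z_0=T_s z_0\in X$, and then $\|\widetilde T_s w-T_s z_0\|\le\|w-z_0\|=\|\mu\|=\|(\mathrm{id}-P)w\|$, combined with $P(\widetilde T_s w)\in X$ and the triangle/sharpness of the $\oplus_1$ norm to conclude $\|(\mathrm{id}-P)\widetilde T_s w\|\le\|\widetilde T_s w-T_s z_0\|\le\|(\mathrm{id}-P)w\|$. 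A secondary technical point is verifying the hypotheses of Theorem~\ref{lau} (separability/equicontinuity) for the weak$^*$ topology on the bounded set in play, which may require replacing $S$ by a countable subsemigroup or working with the appropriate separately-continuous refinement of the fixed point theorem.
\end{rema}
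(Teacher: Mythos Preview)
Your approach has a genuine gap at the invariance step. From $B\subseteq\overline{T_s(B)}$ you correctly deduce $K\subseteq\widetilde T_s(K)$, but then you assert ``so $\widetilde T_s$ maps $\widetilde K$ into itself''. This does not follow: the inclusion goes the wrong way. What you have shown yields $\widetilde K\subseteq\widetilde T_s(\widetilde K)$ (since $\widetilde T_s(\widetilde K)$ is weak$^*$ compact, convex, and contains $K$), whereas Theorem~\ref{lau} requires $\widetilde T_s(\widetilde K)\subseteq\widetilde K$. The hypothesis $B\subseteq\overline{T_s(B)}$ says that $T_s(B)$ is \emph{large}, not small, so any set built directly from $B$ by closures and convex hulls will tend to expand under $\widetilde T_s$, not stay invariant. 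There is no evident repair within your framework short of changing the construction of the invariant set; the subsequent projection-to-$X$ argument, already delicate, never gets off the ground.

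The paper's proof avoids this by a different choice of invariant set: it takes the Chebyshev center $C$ of $B$ computed in $X^{**}$, i.e.\ the set of points minimizing $\sup_{b\in B}\|x-b\|$. L-embeddedness is used \emph{first}, and cleanly, to show $C\subset X$ (any nonzero $X_s$-component of $x=c+\xi$ gives $\sup_b\|x-b\|=\sup_b\|c-b\|+\|\xi\|\ge r_B+\|\xi\|$, forcing $\xi=0$). Then the hypothesis $B\subseteq\overline{T_s(B)}$ is used in exactly the right direction for invariance: if $x\in C$ then $\|T_s(x)-T_s(b)\|\le r_B$ for every $b\in B$, hence $\|T_s(x)-a\|\le r_B$ for every $a\in\overline{T_s(B)}\supseteq B$, so $T_s(x)\in C$. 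Because $C$ already sits inside $X$ and is weakly compact there, Theorem~\ref{lau} applies directly in $X$ with the norm topology (where nonexpansive immediately gives equicontinuous), and none of the bidual-extension, weak$^*$-equicontinuity, or push-down difficulties you flagged in your remark ever arise.
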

\begin{proof}
We use the idea of \cite{BGM} to show that there is a nonempty weakly compact convex set in $X$ that is $S$-invariant. We first regard $B$ as a subset of $X\ds$. Let $r_B$ be the Chebyshev radius and $C$  the Chebyshev center of $B$ in $X\ds$. Then $C$ is nonempty, weak* compact and convex. In fact, for each $r> r_B$, 
\[ 
C _r :=\{x\in X\ds: \, \sup_{b\in B}\|x-b\|\leq r\} 
\]
 is nonempty by the definition of $r_B$. Note that $C_r = \cap_{b\in B}B[b,r]$, where $B[b,r]$ denotes the closed ball in $X\ds$ centered at $b$ with radius $r$. The set $C_r$ is convex and weak* compact since each $B[b,r]$ is. The collection $\{C_r :\, r> r_B\}$ is decreasing as $r$ decreases. Thus $C = \cap_{r> r_B}C_r$ is nonempty and is still weak* compact and convex. By the L-embeddedness of $X$ there is a subspace $X_s$ of $X\ds$ such that $X\ds = X \oplus_1X_s$. Let $x\in C$. then there are $c\in X$ and $\xi\in X_s$ such that $x = c +\xi$. For each $b\in B$, $\|x-b\| = \|c-b\| + \|\xi\|$. So
\[ r_B \geq \sup_{b\in B}\|x-b\| = \sup_{b\in B}\|c-b\| +\|\xi\|\geq r_B +\|\xi\|. \]
The last inequality is due to (\ref{radius}).
 Therefore, we must have $\xi = 0$. This shows that $ C\subset X$. The weak* compactness of $C$ (in $X\ds$) is the same as the weak compactness of it (in $X$). So $C$ is a nonempty, weakly compact and convex subset of $X$.

Now for $s\in S$, $b\in B$ and $x\in C$ we have 
\[ \|T_s(x) - T_s(b)\| \leq \|x- b\| \leq r_B \]
since $T_s$ is nonexpansive. This implies that $\|T_s(x) - a\| \leq r_B$ for $a\in \overline{T_s(B)}$ ($s\in S$, $x\in C$). In particular, this holds for all $a\in B$ since $B\subseteq \overline{T_s(B)}$. Thus
 $T_s(x) \in C$ whenever $x\in C$ and $s\in S$, showing that $C$ is $S$-invariant. Note that a nonexpansive representation of $S$ is indeed equicontinuous. By Theorem~\ref{lau}, there is a common fixed point for $S$ in $C$. The proof is complete.
\end{proof}

Theorem~\ref{lau} has been extended to the general semitopological semigroup setting in \cite{L-Z}. A more general version of Theorem~\ref{L-embedded} and some discussion on when there is a set $B$ such that $T_s(B) =B$ for all $s\in S$ can also be found there.

\section{$2m$-weak amenability of \L1}\label{proof}
 
 Let $X$ be a Banach space. Denote the space of all bounded linear operators on $X$ by $B(X)$. The space $B(X)$ is a Banach algebra with the operator norm topology and the composition product. So is $B(X)\times B(X)^\text{op}$ with the product topology and coordinatewise operations, where $B(X)^\text{op}$ is the algebra formed by reversing the order of the product in $B(X)$. The \emph{strong operator topology} (or briefly $so$-topology) on $B(X)\times B(X)^\text{op}$ is the topology induced by the family of seminorms $\{p_x: x\in X\}$, where
\[  p_x(S, T) = \max\{\|S(x)\|, \|T(x)\|\} \quad (S,T\in B(X)) \]
(see \cite[page 327]{DAL}). 

Given a locally compact group $G$, let $M(G)$ be the space of all bounded complex valued regular Borel measures on $G$. With the convolution product of measures and with the norm induced by the total variation, $M(G)$ is a Banach algebra containing $L^1(G)$ as a closed ideal.  In fact, $\M$ is the multiplier algebra of $\L1$, 
and as the multiplier algebra of $\L1$, $\M$ is a subalgebra of $B(\L1) \times B(\L1)^{\text{op}}$ with each $\mu\in \M$ being identified with (the double multiplier) $(\ell_\mu, r_\mu) \in B(\L1) \times B(\L1)^\text{op}$, where $\ell_\mu$ and $r_\mu$ denote, respectively, the left multiplier operator and the right multiplier operator on $L^1(G)$ implemented by $\mu$.  We refer to \cite{DAL} for the standard theory about multipliers and multiplier algebras.

It is well-known that  $lin\{\dt: t\in G\}$, the linear space generated by the point measures $\dt$ ($t\in G$), is dense in $\M$ in the $so$-topology \cite[Proposition~3.3.41(i)]{DAL}. In particular, for each $h\in \L1$ there is a net $(u_\al)\subset lin\{\dt: t\in G\}$ such that $\|(u_\al - h)*a\|_1 \to 0$ and $\|a*(u_\al - h)\|_1 \to 0$ for all $a\in \L1$.

Recall that if $\cA$ is a Banach algebra, then its bidual $\cA^{**}$ is a Banach algebra equipped with the Arens product $\Box$ defined 
\[
\la f, u\Box v\ra = \la v\cdot f, u\ra, \quad v\cdot f \in \cA^*: \;\la a, v\cdot f\ra = \la fa, v\ra
\] 
for $u,v\in \cA^{**}$, $f\in \cA^*$ and $a\in \cA$. If $X$ is a Banach $\cA$-bimodule, then its bidual $X^{**}$ is naturally a Banach $\cA^{**}$-bimodule with the module actions given by

\[ \la F, u\cdot M\ra = \la M\cdot F, u\ra, \quad M\cdot F \in A^* : \; \la a, M\cdot F\ra = \la F\cdot a, M\ra \] 
and
\begin{align*}
 &\la F,  M\cdot u\ra = \la u\cdot F, M\ra, \quad u\cdot F \in X^* :\; \la x, u\cdot F\ra = \la F\cdot x, u\ra, \\
 &F\cdot x \in A^*: \;\la a, F\cdot x\ra = \la x\cdot a, F\ra  
\end{align*} 
for $u\in \cA^{**}$, $M\in X^{**}$, $F\in X^*$, $x\in X$ and $a\in \cA$. In particular,  for any integer $m\in\bN$, $\cA^{(2m)}$ is a Banach $\cA^{**}$-bimodule.

A Banach $\cA$-bimodule $X$ is called \emph{neo-unital} if $X = \cA X\cA$, that is every element $x\in X$ may be written in the form $x=ayb$ for some $a,b\in \cA$ and $y\in X$. If $\cA$ has a bounded approximate identity $(e_\al)$ and $X$ is a neo-unital Banach $\cA$-bimodule, then we may extend the $\cA$ bimodule actions on $X$ to $M(\cA)$, the multiplier algebra  of $\cA$. The extension is defined as follows.
\[
\mu x = \lim_\al (\mu e_\al)x = (\mu a)yb, \quad x\mu = \lim_\al x( e_\al\mu) = ay(b\mu)
\]
for $\mu\in M(\cA)$ and $x=ayb\in X$. Here we note that $\mu a, b\mu \in \cA$ since $\cA$ is (always) an ideal of $M(\cA)$.
These operations make $X$ a unital Banach $M(\cA)$-bimodule. In this case a continuous derivation $D$: $\cA \to X^*$ may be extended to a continuous derivation from $M(\cA)$ to $X^*$ by defining
\[
D(\mu) = \text{wk*-}\lim_\al D(\mu e_\al) \quad (\mu\in M(\cA)).
\]
Moreover this extended $D$ is $so$-weak* continuous. In fact, if $\mu_\al \to \mu$ in $M(\cA)$ in the $so$-topology and $x=ayb \in X$ for $a,b\in \cA$ and $y\in X$, then
\begin{align*}
\lim_\al \la x,D(\mu_\al) \ra &= \lim_\al \la ay,D(b\mu_\al) \ra - \lim_\al \la \mu_\al ay,D(b) \ra \\
                              &=  \la ay,D(b\mu) \ra -  \la \mu ay,D(b) \ra = \la x,D(\mu) \ra .
\end{align*}
We refer to the seminar paper \cite{JOH} and the monograph \cite{DAL} for more details of the above extensions.

We now can prove the main result of the paper.
\begin{thm}\label{2m L1}
Let $G$ be a locally compact group. Then the group algebra $L^1(G)$ is $2m$-weakly amenable for each $m\in \bN$.
\end{thm}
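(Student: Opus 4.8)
The plan is to realise $D$ as an inner derivation by producing its implementing element as a common fixed point of a twisted conjugation action of the discrete group $G$, the key tool being Theorem~\ref{L-embedded}. Fix $m\in\bN$ and a continuous derivation $D\colon \L1\to L^1(G)^{(2m)}$; we must find $\eta$ with $D=\text{ad}_\eta$. As a preliminary step we use that $\L1$ has a bounded approximate identity: by the standard reduction to a neo-unital module, together with the module and derivation extensions recalled above (see also \cite{JOH}, \cite{DAL}), it suffices to show that every continuous derivation $D\colon \L1\to X$ is inner, where $X$ is an L-embedded Banach $\L1$-bimodule whose module action extends to a unital action of $\M$, and where $D$ then extends to a derivation $\widetilde D\colon \M\to X$ that is continuous for the $so$-weak* topologies. (Such $X$ exist: $L^1(G)^{(2m)}$ is itself L-embedded, and passing to its neo-unital part does not affect whether the derivation is inner.) In particular $\widetilde D(\delta_e)=0$, since $\widetilde D(\delta_e)=\widetilde D(\delta_e\delta_e)=\delta_e\widetilde D(\delta_e)+\widetilde D(\delta_e)\delta_e=2\widetilde D(\delta_e)$ and $\delta_e$ acts on $X$ as the identity.

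Now regard $G$ as a discrete semigroup and, for $t\in G$, define $T_t\colon X\to X$ by
\[
T_t(\xi)=\delta_t\,\xi\,\delta_{t^{-1}}-\widetilde D(\delta_t)\,\delta_{t^{-1}}\qquad(\xi\in X),
\]
where juxtaposition denotes the $\M$-module action. Each $T_t$ is affine, being a bounded linear map followed by a translation. Using $\delta_s\delta_t=\delta_{st}$ in $\M$, the derivation identity $\widetilde D(\delta_s\delta_t)=\delta_s\widetilde D(\delta_t)+\widetilde D(\delta_s)\delta_t$, and $\delta_t\delta_{(st)^{-1}}=\delta_{s^{-1}}$, one checks that $T_s\circ T_t=T_{st}$; hence $\fS=\{T_t:t\in G\}$ is an affine representation of $G$ on $X$, with $T_e$ the identity map of $X$. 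It is nonexpansive because $\|\delta_t\|_{M(G)}=1$ and the module action is contractive: $\|T_t(\xi)-T_t(\zeta)\|=\|\delta_t(\xi-\zeta)\delta_{t^{-1}}\|\leq\|\xi-\zeta\|$ for $\xi,\zeta\in X$.

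To invoke Theorem~\ref{L-embedded} we supply a nonempty bounded $B\subseteq X$ with $B\subseteq\overline{T_s(B)}$ for every $s\in G$. Take $B$ to be the orbit of $0$, namely $B=\{T_t(0):t\in G\}=\{-\widetilde D(\delta_t)\,\delta_{t^{-1}}:t\in G\}$: this is nonempty and bounded by $\|\widetilde D\|$, since $\|\widetilde D(\delta_t)\,\delta_{t^{-1}}\|\leq\|\widetilde D(\delta_t)\|\leq\|\widetilde D\|$. For $s\in G$ the map $t\mapsto st$ is a bijection of $G$, so $T_s(B)=\{T_sT_t(0):t\in G\}=\{T_{st}(0):t\in G\}=B$; in particular $B\subseteq\overline{T_s(B)}$. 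Since $G$ is a group, $\WG$ has a left invariant mean, and $X$ is L-embedded, so Theorem~\ref{L-embedded} yields a common fixed point $\eta\in X$ for $\fS$.

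It remains to unwind. The equality $T_t(\eta)=\eta$ reads $\delta_t\,\eta\,\delta_{t^{-1}}-\widetilde D(\delta_t)\,\delta_{t^{-1}}=\eta$; multiplying on the right by $\delta_t$ and using $\delta_{t^{-1}}\delta_t=\delta_e$ together with the fact that $\delta_e$ acts as the identity gives $\delta_t\eta-\widetilde D(\delta_t)=\eta\delta_t$, that is $\widetilde D(\delta_t)=\delta_t\eta-\eta\delta_t=\text{ad}_\eta(\delta_t)$ for every $t\in G$. By linearity $\widetilde D$ and $\text{ad}_\eta$ agree on $lin\{\delta_t:t\in G\}$, which is $so$-dense in $\M$; since both maps are $so$-weak* continuous on $\M$, they coincide on $\M$, hence on $\L1$. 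Thus $D=\text{ad}_\eta|_{\L1}$ is inner, and $\L1$ is $2m$-weakly amenable. I expect the main obstacle to be the preliminary step: verifying that after the reduction to a neo-unital module one still has an L-embedded $\M$-bimodule and that $\widetilde D$ extends to $\M$ with the required $so$-weak* continuity — precisely what the machinery developed just before the theorem accomplishes. Once this is in place the core of the argument, the twisted conjugation representation $\fS$ and the extraction of $\eta$ as a common fixed point, is short, and all hypotheses of Theorem~\ref{L-embedded} are immediate, the bounded almost-invariant set being literally a single orbit that $\fS$ permutes.
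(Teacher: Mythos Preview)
Your core argument---the twisted-conjugation representation $T_t(\xi)=\delta_t\,\xi\,\delta_{t^{-1}}\mp\widetilde D(\delta_t)\,\delta_{t^{-1}}$, the orbit $B$ of $0$, and the appeal to Theorem~\ref{L-embedded}---is exactly the paper's, up to an immaterial sign. The divergence, which you correctly flag as ``the main obstacle,'' is in the preliminary step, and there your formulation does not match what the paper's machinery actually delivers.

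You want a single module $X$ that is simultaneously L-embedded, carries a unital $\M$-action, and receives an $so$-weak* continuous extension $\widetilde D$. Your parenthetical justification (``$L^1(G)^{(2m)}$ is itself L-embedded, and passing to its neo-unital part does not affect whether the derivation is inner'') conflates two different modules. The full $X=L^1(G)^{(2m)}$ is L-embedded, but its predual $Y=L^1(G)^{(2m-1)}$ is not neo-unital, so the extension of $D$ to $\M$ described before the theorem does not apply to $D$ directly. Conversely, the neo-unital piece $X_1\cong(\cA Y\cA)^*$ does admit the $\M$-extension, but nothing in the paper shows $X_1$ is L-embedded, and your hope that ``the machinery developed just before the theorem accomplishes'' this is misplaced: that machinery concerns only the extension of module actions and derivations, not L-embeddedness.

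The paper does \emph{not} resolve the tension by producing one module with both properties. Instead it decomposes $D=D_1+D_2+D_3$ via the idempotents $\ell_E$, $r_E$; disposes of $D_2$ and $D_3$ by \cite[Proposition~1.5]{JOH} (one side of the module action is trivial on each); extends only $D_1$ to $\M$, which is legitimate because $D_1$ lands in $X_1$, the dual of the neo-unital module $\cA Y\cA$; and then runs the fixed-point argument in the \emph{full} L-embedded space $X=L^1(G)^{(2m)}$, using that $B=\Delta(G)\subset X_1\subset X$ and that the $\M$-action on $X$ (obtained by iterating duality from the multiplier action on $\L1$) is contractive. The common fixed point $\xi$ lies a priori only in $X$, so the paper projects it back to $X_1$ via $x=\ell_E\circ r_E(-\xi)$ and checks $D_1(\delta_t)=\text{ad}_x(\delta_t)$ by a short computation using $E\cdot a=a\cdot E=a$ for $a\in\cA$.

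In short: your fixed-point mechanism is correct and identical to the paper's; what needs repair is the reduction. Replace your hypothetical single $X$ by the explicit decomposition $X=X_1\oplus X_2\oplus X_3$, extend only $D_1$, apply Theorem~\ref{L-embedded} in the ambient $X$, and project the fixed point back to $X_1$.
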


\begin{proof}
Denote $\cA = L^1(G)$, $X = \cA^{(2m)}$ and $Y = \cA^{(2m-1)}$. Then, as we have indicated, $X$ is a Banach $\cA^{**}$-bimodule. 
Let $(e_\al)$ be a bounded approximate identity of $\cA$ and let $E$ be a weak* cluster point of $(e_\al)$ in $\cA^{**}$. Then $Ea = aE = a$ for all $a\in \cA$. We have the $\cA$-bimodule decomposition
 $X = X_1 \oplus X_2 \oplus X_3$, where 
 \[
 X_1 = \ell_E\circ r_E (X), \quad X_2 = (I-r_E)(X), \quad X_3 =(I-\ell_E)\circ r_E(X).
 \] 
 Here $I$ denotes the identity operator, $\ell_E$ is the left multiplication by $E$ and $r_E$ the right multiplication by $E$.
It is readily seen that 
 \[
 X_2 = (\cA Y)^\perp \cong (Y/\cA Y)^*,\quad X_1 \oplus X_3 = r_E(X) \cong (\cA Y)^*
 \]
 as Banach $\cA$-bimodules. Similarly, in $(\cA Y)^*$
 \[
  (I-\ell_E)\left((\cA Y)^*\right) = (\cA Y\cA)^\perp \cong (\cA Y/\cA Y\cA)^*
 \]
and 
\[
\ell_E\left((\cA Y)^*\right) \cong (\cA Y \cA)^*.
 \]
as Banach $\cA$-bimodules. We have 
\[
X_3 \cong (\cA Y/\cA Y\cA)^* \quad
 \text{and } \;
X_1 \cong (\cA Y \cA)^*.
\]

Let $D$: $\cA \to X$ be a continuous derivation. Then $D = D_1 + D_2 + D_3$, where
\begin{align*}  D_1 &= \ell_E\circ r_E\circ D: \cA \to X_1, \quad D_2 = (I-r_E)\circ D: \cA \to X_2,\\
  D_3 &=(I-\ell_E)\circ r_E\circ D: \cA \to X_3 .\end{align*}
Since $\ell_E$ and $r_E$ are $\cA$-bimodule morphisms, $D_1, D_2$ and $D_3$ are continuous derivations. Note that the left $\cA$-module action on $Y/\cA Y$ and the right $\cA$-module action on $\cA/\cA Y\cA$ are trivial. 
From \cite[Proposition~1.5]{JOH}, $D_2$ and $D_3$ are inner. 
We now show that $D_1$ is also inner. Then $D$ must be inner. 

Since $\cA Y\cA$ is neo-unital, we may extend $D_1$ to a continuous derivation from $M(G)$, the multiplier of $\cA$, to $X_1$.
So we may consider $\Delta$: $G \to X_1\subset X$ defined by 
\[ \Delta(t) = D_1(\dt)\cdot \dtm \quad (t\in G). \]
It is readily seen that 
\begin{equation}\label{delta}
\Delta(ts) =  \dt\cdot \Delta(s) \cdot\dtm + \Delta(t) \quad (t,s\in G).
\end{equation}
Let $B = \Delta(G)$. Then $B$ is a nonempty bounded subset of $X$.
For each $t\in G$, let $T_t$ be the self mapping on $X$ defined by 
\[ T_t(x) = \dt\cdot x \cdot\dtm + \Delta(t) \quad (x\in X). \] 
Using (\ref{delta}) one may check that $\fS = \{T_t: t\in G\}$ defines a representation of $G$ on $X$ which is clearly nonexpansive and affine.
Moreover, $T_t(\Delta(s)) = \Delta(ts)$ ($t,s\in G$) and $T_e =I$. Since $G$ is a group, the above implies $T_t(B) = B$ for each $t\in G$. Here $G$ is regarded as a discrete group.

Since \WG\ has a LIM and $X$ is L-embedded, by Theorem~\ref{L-embedded}, there is $\xi \in X$ such that 
\[  \dt\cdot \xi\cdot\dtm + \Delta(t) = \xi \quad \text{for all } t\in G. \] 
So $D_1(\dt) = \xi\cdot\dt - \dt\cdot \xi = \text{ad}_{-\xi}(\dt)$ ($t\in G$). Let $x=\ell_E\circ r_E(-\xi)$. Then $x\in X_1$. Also $D_1(\dt)\in X_1$. For any $ayb \in \cA Y\cA$ with $a,b\in \cA$ and $y\in Y$, we have
\begin{align*}
 \la ayb,D_1(\dt)\ra & = \la ayb\cdot\dt- \dt\cdot ayb,-\xi\ra = \la E(ayb\cdot\dt- \dt\cdot ayb)E,-\xi\ra \\
                               &= \la ayb\cdot\dt- \dt\cdot ayb, x\ra = \la ayb, \text{ad}_x(\dt)\ra \quad t\in G. 
\end{align*}
So it is true that $D_1(\dt) = \text{ad}_{x}(\dt)$ for all $t\in G$.
From what we have shown before stating the current theorem, both $D_1$ and $\text{ad}_{x}$, as continuous derivations from $\M$ into the dual of a neo-unital $\cA$-bimodule, are $so$-weak* continuous. 
Since $lin(\dt: t\in G)$ is dense in $M(G)$ in the $so$-topology, we finally have 
\[  D_1(f) = \text{ad}_{x}(f) \quad (f\in \cA = L^1(G)), \]
therefore $D_1$ is inner.
The proof is complete.
\end{proof}


\begin{thebibliography}{18}

\bibitem{BGM}
U. Bader, T. Gelander and N. Monod,
{A fixed point theorem for $L^1$ spaces}, Invent. Math. DOI: 10.1007/s00222-011-0363-2.

\bibitem{BPP}
T. D. Benavides, M. A. Jap\'on Pineda and S. Prus,
Weak compactness and fixed point property for affine mappings,
{J Funct. Anal.} {209} (2004), 1-15.

\bibitem{C-G-Z} Y. Choi, F. Ghahramani, and Y. Zhang, Approximate and pseudo-amenability of various classes of Banach algebras, J. Funct. Anal. {256} (2009), 3158-3191. \vskip3mm

\bibitem{DAL} H.G. Dales, {Banach algebras and automatic continuity}, Clarendon Press, Oxford, 2000.\vskip3mm

\bibitem{D-G-G}
  H. G. Dales, F. Ghahramani and N. Gr\o nb\ae k, 
 {Derivations into iterated duals of Banach algebras},
  Studia Math. {128} (1998), 19-54.\vskip 3mm
  

\bibitem{D-S}
 D. van Dulst and B. Sims, Fixed points of nonexpansive mappings and Chebyshev centers in Banach spaces with norms of type (KK), Banach space theory and its applications (Bucharest, 1981), Lecture Notes in Math. 991, Springer, Berlin-New York, 1983, 35-43. 

 \bibitem{G-K}
K. Goebel and W. A. Kirk, Topics in metric fixed point theory, Cambridge Studies in Advanced Math. 28, Cambridge University Press, 1990.

\bibitem{greenleaf}
F. P. Greenleaf, 
 Invariant means on topological groups and their applications, {Van Nostrand Mathematical Studies, No. 16} Van Nostrand Reinhold Co., New York-Toronto, Ont.-London, 1969.

\bibitem{HWW}
P. Harmand, D. Werner and W. Werner,
M-ideals in Banach spaces and Banach algebras, Lecture Notes in Math., 1547, Springer-Verlag, 1993.

 \bibitem{Jap 02}
M. A. Jap\'on Pineda, Some fixed point results on L-embedded Banach spaces, J. Math. Anal. Appl. 272 (2002), 381-391.

\bibitem{JOH}{B.E. Johnson},{ Cohomology in Banach algebras},
{ Mem. Amer. Math. Soc.} 127 (1972).\vskip3mm

\bibitem{Joh_weak}
B.E. Johnson,
Weak amenability of group algebras,
Bull. London Math. Soc. 23 (1991), 281-284. 

\bibitem{Joh 1999}
B.~E. Johnson, Permanent weak amenability of group algebras of free groups,
  Bull. London Math. Soc. 31 (1999), 569-573.

\bibitem{Lau76} 
 A.\ T.-M.\ Lau,   Some fixed point theorems and W*-algebras, {\it Fixed point theory and applications} (ed. \ S.\ Swaminathan) Academic Press, (1976), 121-129.
 
 \bibitem{L-Z} 
 A.\ T.-M.\ Lau and Y. Zhang,
Fixed point properties for semigroups of nonlinear mappings and amenability, preprint.


\bibitem{Lim 80}
T. C. Lim,
Asymptotic centers and nonexpansive mappings in conjugate Banach spaces, {Pacific J. Math.} 90 (1980), 135-143.

\bibitem{Lim 74}
T. C. Lim,
Characterizations of normal structure, {Proc. Amer. Math. Soc.} {43} (1974), 313-319.

\bibitem{Lim Pac}
T. C. Lim,
A fixed point theorem for families of nonexpansive mappings, {Pacific J. Math.} 53 (1974), 484-493.

\bibitem{Losert10}
V. Losert,
On derivation and crossed homomorphisms,  {Banach algebras 2009}, Banach Center Pub., Vol. 91, Inst. Math., Pol. Acad. Sci., Warszawa 2010, 199-217.

\bibitem{Losert08}
V. Losert,
The derivation problem for group algebras,  {Ann. Math.} 168 (2008), 221-246.

\bibitem{Ric}
N.W. Rickert,
 Amenable groups and groups with the fixed point property, 
{Trans. Amer. Math. Soc.} 127 (1967), 221-232.

\end{thebibliography}
\end{document}